\documentclass[amstex,12pt,reqno]{amsart}
\usepackage{amsmath,amsfonts,amssymb,amsthm,enumerate,multicol}%,hyperref
\textwidth 16cm
\textheight 22cm
\topmargin -1.0cm
\oddsidemargin 1cm
\evensidemargin 1cm
\newtheorem{lemma}{Lemma}
\newtheorem{thm}{Theorem}

\thispagestyle{empty}
\numberwithin{equation}{section}
\begin{document}

\leftline{ \scriptsize \it  }
\title[]
{Global approximation theorems for general Gamma type Operators }
\maketitle

\begin{center}
{\bf Alok Kumar}
\vskip0.2in
Department of Computer Science\\
Dev Sanskriti Vishwavidyalaya Haridwar\\
Haridwar-249411, India\\
\vskip0.2in
%$^2$Abant Izzet Baysal University\\
%Faculty of Science and Arts\\
%Department of Mathematics, 14280 Bolu, Turkey.
%\vskip0.2in

   alokkpma@gmail.com
\vskip0.2in
Dedicated to Prof. P. N. Agrawal
\end{center}

\begin{abstract}
In this paper, we obtained some global approximation results for general Gamma type operators.\\
Keywords: Gamma type operators, Global approximation, Positive linear operators.\\
Mathematics Subject Classification(2010):  41A25, 26A15, 40A35.
\end{abstract}
\section{\textbf{Introduction}}
For a measurable complex valued and locally bounded function defined on $[0,\infty)$, Lupas and M$\ddot{u}$ller \cite{LUP} defined and studied some
approximation properties of linear positive operators $\{G_{n}\}$ defined by
\begin{eqnarray*}
G_{n}(f;x) = \int_{0}^{\infty}g_{n}(x,u)f\bigg(\frac{n}{u}\bigg)du,
\end{eqnarray*}
where
\begin{eqnarray*}
\displaystyle g_{n}(x,u)=\frac{x^{n+1}}{n!}e^{-xu}u^{n}, ~~~~x>0.
\end{eqnarray*}
In \cite{SM}, Mazhar gives an important modifications of the Gamma operators using the same $ g_{n}(x,u)$
\begin{eqnarray*}
 F_{n}(f;x)  &=& \int_{0}^{\infty}\int_{0}^{\infty}g_{n}(x,u)g_{n-1}(u,t)f(t)dudt\\
 &=&\frac{(2n)! x^{n+1}}{n!(n-1)!}\int_{0}^{\infty}\frac{t^{n-1}}{(x+t)^{2n+1}}f(t)dt,\,\, n>1, \,\, x>0.
\end{eqnarray*}

Recently, Karsli \cite{HK} considered the following Gamma type linear and positive operators
\begin{eqnarray*}
 L_{n}(f;x)  &=& \int_{0}^{\infty}\int_{0}^{\infty}g_{n+2}(x,u)g_{n}(u,t)f(t)dudt\\
 &=&\frac{(2n+3)!x^{n+3}}{n!(n+2)!}\int_{0}^{\infty}\frac{t^{n}}{(x+t)^{2n+4}}f(t)dt, \,\, x>0,
\end{eqnarray*}
and obtained some approximation results.\\
In \cite{HMA}, Karsli and $\ddot{O}$zarslan obtained some local and global approximation results for the operators $L_{n}(f;x)$.\\
In 2007, Mao \cite{MO} define the following generalised Gamma type linear and positive operators
\begin{eqnarray*}
 M_{n,k}(f;x)  &=& \int_{0}^{\infty}\int_{0}^{\infty}g_{n}(x,u)g_{n-k}(u,t)f(t)dudt\\
 &=&\frac{(2n-k+1)!x^{n+1}}{n!(n-k)!}\int_{0}^{\infty}\frac{t^{n-k}}{(x+t)^{2n-k+2}}f(t)dt, \,\, x>0,
\end{eqnarray*}
which includes the operators $F_{n}(f;x)$ for $k=1$ and $ L_{n-2}(f;x)$ for $k=2$.\\
 Some approximation properties of $M_{n,k}$ were studied in  \cite{HKL} and \cite{HPM}. Several authors obtain the global approximation results for different operators
 (see \cite{MB}, \cite{MF} and \cite{ZF}).\\
  We can rewrite the operators $M_{n,k}(f;x)$ as
\begin{eqnarray}\label{eq11}
 M_{n,k}(f;x) = \int_{0}^{\infty}K_{n,k}(x,t)f(t)dt,
\end{eqnarray}
where
\begin{eqnarray*}
K_{n,k}(x,t)=\frac{(2n-k+1)!x^{n+1}}{n!(n-k)!}\frac{t^{n-k}}{(x+t)^{2n-k+2}}, \,\, ~~x,t\in(0,\infty).
\end{eqnarray*}

In this paper, we study some global approximation results of the operators $ M_{n,k}$. Let $p\in N_{0}$(set of non-negative integers), $f\in C_{p},$  where $C_{p}$ is a polynomial weighted space with the weight function $w_{p}$,
\begin{eqnarray}\label{eq12}
w_{0}(x)=1,\,\,\, w_{p}(x)=\frac{1}{1+x^{p}}, \,\,\, p\geq1,
\end{eqnarray}
and $C_{p}$ is the set of all real valued functions $f$ for which $w_{p}f$ is bounded and uniformly continuous on $[0,\infty)$.\\
The norm on $C_{p}$ is defined by
\begin{eqnarray*}
||f||_{p}=\sup_{x\in[0,\infty)}w_{p}(x)|f(x)|,  \,\, f\in C_{p}[0,\infty).
\end{eqnarray*}

 We also consider the following Lipschitz classes:
\begin{eqnarray*}
\omega_{p}^{2}(f;\delta)=\sup_{h\in(0,\delta]}||\Delta_{h}^{2}f||_{p},
\end{eqnarray*}
\begin{eqnarray*}
\Delta_{h}^{2}f(x)=f(x+2h)-2f(x+h)+f(x),
\end{eqnarray*}
\begin{eqnarray*}
\omega_{p}^{1}(f;\delta)=\sup\{w_{p}(x)|f(t)-f(x)|:|t-x|\leq\delta \, and \,\, t,x\geq0\},
\end{eqnarray*}
\begin{eqnarray*}
Lip_{p}^{2}\alpha=\{f\in C_{p}[0,\infty):\omega_{p}^{2}(f;\delta)=O(\delta^{\alpha})\, as \, \delta\rightarrow 0^{+}\},
\end{eqnarray*}
where $h>0$ and $\alpha\in(0,2]$.

\section{\textbf{Auxiliary Results}}
In this section we give some preliminary results which will be used in the proofs of our main theorems.\\
Let us consider
\begin{eqnarray*}
e_{m}(t)=t^{m}, \,\,\, \varphi_{x,m}(t)=(t-x)^{m}, \,\, m\in N_{0}.
\end{eqnarray*}
\begin{lemma}\label{l1}\cite{HKL}
For any $m\in N_{0}$(set of non-negative integers), $m\leq n-k$
\begin{eqnarray}\label{eq13}
 M_{n,k}(t^{m};x)=\frac{[n-k+m]_{m}}{[n]_{m}}x^{m}
 \end{eqnarray}
where $n, k\in N$ and $[x]_{m}=x(x-1)...(x-m+1), [x]_{0}=1, x\in R.$ \\
In particular for m = 0, 1, 2... in (2.1)  we get
\begin{enumerate}[(i)]
   \item $ M_{n,k}(1;x)= 1,$
   \item $ M_{n,k}(t;x)=\displaystyle\frac{n-k+1}{n}x,$
   \item $ M_{n,k}(t^{2};x)=\displaystyle\frac{(n-k+2)(n-k+1)}{n(n-1)}x^{2}$.
\end{enumerate}
\end{lemma}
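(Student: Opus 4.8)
The plan is to evaluate $M_{n,k}(e_m;x)$ directly from the integral representation \eqref{eq11} and then recognize the resulting ratio of factorials as the falling-factorial quotient $[n-k+m]_m/[n]_m$. Substituting $f(t)=t^m$ into the kernel form of $M_{n,k}$ gives
\begin{eqnarray*}
M_{n,k}(t^m;x)=\frac{(2n-k+1)!\,x^{n+1}}{n!(n-k)!}\int_0^\infty\frac{t^{n-k+m}}{(x+t)^{2n-k+2}}\,dt,
\end{eqnarray*}
so everything reduces to a single improper integral of Euler--Beta type.

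The key step is to evaluate this integral. I would use the standard identity
\begin{eqnarray*}
\int_0^\infty\frac{t^{a-1}}{(x+t)^{a+b}}\,dt=x^{-b}\,B(a,b)=x^{-b}\,\frac{\Gamma(a)\Gamma(b)}{\Gamma(a+b)},\qquad a,b>0,
\end{eqnarray*}
which follows from the definition of the Beta function after the substitution $t=xs$. Matching exponents forces $a=n-k+m+1$ and $b=n-m+1$; the hypothesis $m\le n-k$ guarantees $b>0$, so the integral converges and the identity applies. This yields
\begin{eqnarray*}
\int_0^\infty\frac{t^{n-k+m}}{(x+t)^{2n-k+2}}\,dt=x^{-(n-m+1)}\,\frac{(n-k+m)!\,(n-m)!}{(2n-k+1)!}.
\end{eqnarray*}

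Substituting this back, the factors $(2n-k+1)!$ cancel and the powers of $x$ combine to $x^{\,n+1-(n-m+1)}=x^m$, leaving
\begin{eqnarray*}
M_{n,k}(t^m;x)=\frac{(n-k+m)!\,(n-m)!}{n!\,(n-k)!}\,x^m.
\end{eqnarray*}
The final step is purely notational: writing $[n]_m=n!/(n-m)!$ and $[n-k+m]_m=(n-k+m)!/(n-k)!$ shows that the coefficient above equals $[n-k+m]_m/[n]_m$, which is \eqref{eq13}. The special cases (i)--(iii) then follow by setting $m=0,1,2$. The only genuine obstacle is the integral evaluation together with the attendant convergence check; once the Beta identity is in place, the remainder is bookkeeping of factorials.
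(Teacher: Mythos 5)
Your computation is correct: the substitution $t=xs$ reduces the moment integral to the Euler Beta integral with $a=n-k+m+1$, $b=n-m+1$, the hypothesis $m\le n-k$ does ensure convergence, and the factorial bookkeeping $[n]_m=n!/(n-m)!$, $[n-k+m]_m=(n-k+m)!/(n-k)!$ recovers \eqref{eq13} and the cases (i)--(iii). The paper itself gives no proof of this lemma --- it is quoted from \cite{HKL} --- and your Beta-function evaluation is precisely the standard derivation of the moments of these Gamma-type kernels, so there is nothing to add.
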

\begin{lemma}\label{l2}\cite{HKL}
Let $m\in N_{0}$ and fixed $x\in(0,\infty)$, then
\begin{eqnarray*}
M_{n,k}(\varphi_{x,m};x)=\left(\sum_{j=0}^{m}(-1)^{j}{m\choose j}\frac{(n-m+j)!(n-k+m-j)!}{n!(n-k)!}\right)x^{m}.
\end{eqnarray*}
\end{lemma}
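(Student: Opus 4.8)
The plan is to expand the monomial $\varphi_{x,m}(t)=(t-x)^m$ by the binomial theorem and then exploit the linearity of $M_{n,k}$ together with the explicit moments supplied by Lemma \ref{l1}. Writing
\[
(t-x)^m=\sum_{j=0}^{m}(-1)^{j}\binom{m}{j}x^{j}\,t^{m-j},
\]
and applying the operator term by term, I would obtain
\[
M_{n,k}(\varphi_{x,m};x)=\sum_{j=0}^{m}(-1)^{j}\binom{m}{j}x^{j}\,M_{n,k}(t^{m-j};x).
\]

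Since the hypothesis is $m\le n-k$, every exponent appearing here satisfies $m-j\le m\le n-k$, so Lemma \ref{l1} is applicable to each summand. Substituting $M_{n,k}(t^{m-j};x)=\dfrac{[n-k+m-j]_{m-j}}{[n]_{m-j}}\,x^{m-j}$ and collecting the powers $x^{j}\cdot x^{m-j}=x^{m}$ into a common factor leaves
\[
M_{n,k}(\varphi_{x,m};x)=\left(\sum_{j=0}^{m}(-1)^{j}\binom{m}{j}\frac{[n-k+m-j]_{m-j}}{[n]_{m-j}}\right)x^{m}.
\]

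The only remaining work is to translate the falling factorials into ordinary factorials. Using $[x]_{r}=x(x-1)\cdots(x-r+1)$, I would verify that $[n-k+m-j]_{m-j}$ is a product of $m-j$ consecutive integers running from $n-k+m-j$ down to $n-k+1$, so that $[n-k+m-j]_{m-j}=(n-k+m-j)!/(n-k)!$; likewise $[n]_{m-j}=n!/(n-m+j)!$. Dividing these gives the coefficient $\dfrac{(n-m+j)!(n-k+m-j)!}{n!(n-k)!}$ for each $j$, and substituting this identity term by term reproduces exactly the claimed formula.

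Since each step is merely the binomial theorem, linearity, or a direct appeal to Lemma \ref{l1}, there is no genuine obstacle in the argument; the only point demanding care is the bookkeeping in the factorial conversion, namely reading off correctly that each falling-factorial product has length $m-j$ and verifying its first and last factors so that the two quotients assemble into the stated ratio.
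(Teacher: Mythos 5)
Your proposal is correct: the binomial expansion of $(t-x)^m$, linearity, the moment formula of Lemma \ref{l1}, and the conversions $[n-k+m-j]_{m-j}=(n-k+m-j)!/(n-k)!$ and $[n]_{m-j}=n!/(n-m+j)!$ all check out and assemble into exactly the stated coefficient. The paper itself gives no proof of this lemma (it is quoted from \cite{HKL}), and your argument is the standard derivation one would expect there; the only caveat worth recording is that, as in Lemma \ref{l1}, the restriction $m\le n-k$ (or at least $m\le n$, so the moments exist and the factorials are defined) is implicitly required even though the lemma as stated omits it.
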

\begin{lemma}\label{l3} For $m=0,1,2,3,4$, one has
\begin{enumerate}[(i)]
   \item $ M_{n,k}(\varphi_{x,0};x)=\displaystyle1,$
   \item $ M_{n,k}(\varphi_{x,1};x)=\displaystyle\frac{1-k}{n}x,$
   \item $ M_{n,k}(\varphi_{x,2};x)=\displaystyle\frac{k^{2}-5k+2n+4}{n(n-1)}x^{2},$
   \item $ M_{n,k}(\varphi_{x,3};x)=\displaystyle\frac{-k^{3}+12k^{2}-17k+n(18-12k)+24}{n(n-1)(n-2)}x^{3},$
   \item $ M_{n,k}(\varphi_{x,4};x)=\displaystyle\frac{k^{4}-22k^{3}+k^{2}(143+12n)-k(314+108n)+12n^{2}+268n+192}{n(n-1)(n-2)(n-3)}x^{4},$
   \item $ M_{n,k}(\varphi_{x,m};x)=\displaystyle O\left(n^{-[(m+1)/2]}\right)$.
\end{enumerate}
\end{lemma}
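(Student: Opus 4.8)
The plan is to treat the explicit evaluations (i)--(v) and the general order estimate (vi) separately.

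For (i)--(v) I would specialise the closed form of Lemma~\ref{l2}. Writing
\[
M_{n,k}(\varphi_{x,m};x)=x^{m}\sum_{j=0}^{m}(-1)^{j}\binom{m}{j}\frac{(n-m+j)!\,(n-k+m-j)!}{n!\,(n-k)!},
\]
I would cancel the factorials in each summand so that the $j$-th term becomes a quotient of a product of $m-j$ linear factors $(n-k+s)$ by $m-j$ linear factors $(n-s)$, place everything over the common denominator $[n]_{m}=n(n-1)\cdots(n-m+1)$, expand, and collect powers of $n$ and $k$. For $m=2$ the numerator collapses to $k^{2}-5k+2n+4$ over $n(n-1)$, exactly as in (iii); the cases $m=3,4$ are the same manipulation carried one and two degrees further. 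This is purely computational, the only delicate point being the bookkeeping of cross terms for $m=4$.

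For (vi) the useful object is the ratio $r_{i}:=[n-k+i]_{i}/[n]_{i}$, so that $M_{n,k}(e_{i};x)=r_{i}x^{i}$ by Lemma~\ref{l1}. Expanding $\varphi_{x,m}=\sum_{i=0}^{m}\binom{m}{i}(-1)^{m-i}x^{m-i}e_{i}$ and using linearity gives
\[
M_{n,k}(\varphi_{x,m};x)=x^{m}\sum_{i=0}^{m}(-1)^{m-i}\binom{m}{i}r_{i},
\]
and the bracketed sum is precisely the $m$-th forward difference $(\Delta^{m}r)(0)$ of $r$ viewed as a function of the integer variable $i$. Since each $r_{i}$ is a rational function of $n$ for fixed $i,k$, it admits an expansion $r_{i}=\sum_{p\ge0}a_{p}(i)\,u^{p}$ in $u=1/n$. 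The heart of the argument is the claim that $a_{p}(i)$ is a polynomial in $i$ of degree at most $2p$. Granting this, because $\Delta^{m}$ annihilates every polynomial of degree $<m$, every coefficient $a_{p}$ with $2p<m$ contributes nothing to $(\Delta^{m}r)(0)$; the first surviving power is $p=[(m+1)/2]$, whence $M_{n,k}(\varphi_{x,m};x)=O(n^{-[(m+1)/2]})$.

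To prove the degree bound I would pass to logarithms: from
\[
\log r_{i}=\sum_{\ell=1}^{i}\bigl[\log(1-(k-\ell)u)-\log(1-(\ell-1)u)\bigr]
\]
and the expansion $\log(1-cu)=-\sum_{q\ge1}c^{q}u^{q}/q$, the coefficient of $u^{q}$ in $\log r_{i}$ is $-\tfrac1q\sum_{\ell=1}^{i}[(k-\ell)^{q}-(\ell-1)^{q}]$, a polynomial in $i$ of degree $q+1$ by Faulhaber's formula. Exponentiating, the coefficient $a_{p}(i)$ is a finite sum of products of these log-coefficients whose indices $q_{j}$ sum to $p$; a product of $t$ such factors has degree $\sum_{j}(q_{j}+1)=p+t\le 2p$ since $t\le p$. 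This gives $\deg a_{p}\le 2p$ and closes the argument. The main obstacle is exactly this degree estimate; once it is in place part (vi) is immediate, and parts (i)--(v) are routine algebra.
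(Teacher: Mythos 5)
Your proposal is correct, and it is worth separating the two halves. For parts (i)--(v) you do exactly what the paper does: the paper's entire proof is the single line ``Using Lemma~\ref{l2}, we get Lemma~\ref{l3}'', i.e.\ substitute $m=0,\dots,4$ into the closed form of Lemma~\ref{l2}, clear the factorials over the common denominator $[n]_m$, and expand; your description of that bookkeeping matches (and a spot check of $m=2$ confirms the numerator $k^2-5k+2n+4$). For part (vi), however, the paper offers no argument at all, whereas you supply a genuine one: writing $M_{n,k}(\varphi_{x,m};x)=x^m(\Delta^m r)(0)$ with $r_i=[n-k+i]_i/[n]_i$, expanding $r_i$ in powers of $u=1/n$, and showing via the logarithm and Faulhaber's formula that the coefficient of $u^p$ is a polynomial in $i$ of degree at most $2p$ (degree $\sum_j(q_j+1)=p+t\le 2p$ since each $q_j\ge 1$ forces $t\le p$), so that $\Delta^m$ annihilates every term with $2p<m$ and the first surviving power is $p=[(m+1)/2]$. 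This argument is sound --- the only hypotheses you should make explicit are that $n$ is large enough that $m\le n-k$ (so Lemma~\ref{l1} applies) and that each $r_i$ is a rational function of $n$ analytic at $u=0$, so the formal expansion is a legitimate asymptotic one. What your route buys is a uniform proof of the order estimate for all $m$ that explains the exponent $[(m+1)/2]$ structurally, rather than verifying it case by case from (i)--(v); what the paper's (implicit) route buys is brevity, at the cost of leaving (vi) unproved for general $m$.
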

\begin{proof}
Using Lemma \ref{l2}, we get Lemma \ref{l3}.
\end{proof}

\begin{thm} \label{t1}
For the operators $M_{n,k}$ and for fixed $p\in N_{0}$, there exists a positive constant $N_{p,k}$ such that
\begin{eqnarray}\label{eq14}
w_{p}(x)M_{n,k}\left(\frac{1}{w_{p}};x\right)\leq N_{p,k}.
\end{eqnarray}
Furthermore, for all $f\in C_{p}[0,\infty),$ we have
\begin{eqnarray}\label{eq15}
\|M_{n,k}(f;.)\|_{p}\leq N_{p,k}\|f\|_{p},
\end{eqnarray}
which guarantees that $M_{n,k}$ maps $C_{p}[0,\infty)$ into $C_{p}[0,\infty)$.
\end{thm}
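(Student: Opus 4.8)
The plan is to exploit that $1/w_p$ is a polynomial, so that $M_{n,k}(1/w_p;x)$ can be computed exactly from the moment formula of Lemma~\ref{l1}. I would first treat $p=0$ separately: there $1/w_0\equiv 1$, so $M_{n,k}(1/w_0;x)=M_{n,k}(1;x)=1$ by Lemma~\ref{l1}(i) and, since $w_0\equiv 1$, estimate \eqref{eq14} holds with $N_{0,k}=1$; the norm bound is then immediate.

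For $p\geq 1$ I would write $1/w_p(t)=1+t^p=1+e_p(t)$ and use linearity of $M_{n,k}$ together with Lemma~\ref{l1} (legitimate once $n\geq p+k$, so that $p\leq n-k$) to get
\begin{eqnarray*}
M_{n,k}\!\left(\tfrac{1}{w_p};x\right)=M_{n,k}(1;x)+M_{n,k}(e_p;x)=1+\frac{[n-k+p]_p}{[n]_p}\,x^p .
\end{eqnarray*}
Multiplying by $w_p(x)=1/(1+x^p)$ and using $1/(1+x^p)\leq 1$ and $x^p/(1+x^p)\leq 1$ for $x\geq 0$,
\begin{eqnarray*}
w_p(x)M_{n,k}\!\left(\tfrac{1}{w_p};x\right)=\frac{1}{1+x^p}+\frac{[n-k+p]_p}{[n]_p}\cdot\frac{x^p}{1+x^p}\leq 1+\frac{[n-k+p]_p}{[n]_p}.
\end{eqnarray*}

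The crux, and the step I expect to be the main obstacle, is to bound the coefficient $[n-k+p]_p/[n]_p$ by a constant independent of $n$. Writing it as a product of $p$ factors,
\begin{eqnarray*}
\frac{[n-k+p]_p}{[n]_p}=\prod_{i=0}^{p-1}\frac{n-k+p-i}{n-i}=\prod_{i=0}^{p-1}\left(1+\frac{p-k}{n-i}\right),
\end{eqnarray*}
I would note that for every admissible $n\geq p+k$ the denominators satisfy $n-i\geq n-p+1\geq k+1$, so each factor is at most $1+(p-k)/(k+1)$ when $p>k$ and at most $1$ when $p\leq k$. In both cases the product is bounded by the $n$-free constant $(1+p/(k+1))^p$, and one may take $N_{p,k}=1+(1+p/(k+1))^p$ to obtain \eqref{eq14}.

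Finally, the norm inequality \eqref{eq15} follows formally from \eqref{eq14} by positivity and linearity: for $f\in C_p$ one has $|f(t)|\leq \|f\|_p/w_p(t)$, hence $|M_{n,k}(f;x)|\leq M_{n,k}(|f|;x)\leq \|f\|_p\,M_{n,k}(1/w_p;x)$; multiplying by $w_p(x)$, applying \eqref{eq14}, and taking the supremum over $x\geq 0$ yields $\|M_{n,k}(f;\cdot)\|_p\leq N_{p,k}\|f\|_p$. This shows $w_pM_{n,k}(f)$ is bounded; the remaining requirement for $M_{n,k}(f)\in C_p$, namely uniform continuity, follows from the smooth dependence of the kernel $K_{n,k}(x,t)$ in \eqref{eq11} on $x$.
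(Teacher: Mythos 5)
Your proposal is correct and follows essentially the same route as the paper: decompose $1/w_p=e_0+e_p$, apply the moment formula of Lemma~\ref{l1}, absorb $w_p(x)(1+x^p)$, and deduce \eqref{eq15} from \eqref{eq14} by positivity. The only difference is that you explicitly bound the coefficient $[n-k+p]_p/[n]_p$ by an $n$-free constant via the product factorization, whereas the paper simply sets $N_{p,k}=\max\bigl\{\sup_n\frac{(n-p)!(n-k+p)!}{n!(n-k)!},1\bigr\}$ and asserts finiteness of the supremum; your version is a welcome (but not structurally different) refinement.
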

\begin{proof}
For $p=0$, (\ref{eq14}) follows immediately. Using Lemma \ref{l1}, we get
\begin{eqnarray*}
w_{p}(x)M_{n,k}\left(\frac{1}{w_{p}};x\right)&=&w_{p}(x)\left(M_{n,k}(e_{0};x)+M_{n,k}(e_{p};x)\right)\\
&=&w_{p}(x)\left(1+\frac{(n-p)!(n-k+p)!}{n!(n-k)!}x^{p}\right)\\
&\leq& N_{p,k}w_{p}(x)(1+x^{p})=N_{p,k},
\end{eqnarray*}
where
\begin{eqnarray*}
N_{p,k}=\max\left\{\sup_{n}\frac{(n-p)!(n-k+p)!}{n!(n-k)!},1\right\}.
\end{eqnarray*}
Observe that for all $f\in C_{p}$ and every $x\in(0,\infty)$, we get
\begin{eqnarray*}
w_{p}(x)\left|M_{n,k}(f;x)\right|&\leq&w_{p}(x)\frac{(2n-k+1)!x^{n+1}}{n!(n-k)!}\int_{0}^{\infty}\frac{t^{n-k}}{(x+t)^{2n-k+2}}|f(t)| \frac{w_{p}(t)}{w_{p}(t)}dt\\
&\leq&\|f\|_{p}w_{p}(x)M_{n,k}\left(\frac{1}{w_{p}};x\right)\\
&\leq& N_{p,k}\|f\|_{p}.
\end{eqnarray*}
Taking supremum over $x\in(0,\infty),$ we get (\ref{eq15}).
\end{proof}

\begin{lemma}\label{l4} For the operators $M_{n,k}$ and fixed $p\in N_{0}$, there exists a positive constant $N_{p,k}$ such that
\begin{eqnarray*}
w_{p}(x)M_{n,k}\left(\frac{\varphi_{x,2}}{w_{p}(t)};x\right)\leq N_{p,k}\frac{x^{2}}{n}.
\end{eqnarray*}
\end{lemma}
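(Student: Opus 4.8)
The plan is to reduce the claim to the central-moment estimates already recorded in Lemma \ref{l3} together with the moment formula of Lemma \ref{l1}, after splitting off the weight. Recall from (\ref{eq12}) that $1/w_p(t) = 1 + t^p$ when $p\geq 1$, while $1/w_0(t) \equiv 1$. I would first dispose of the case $p=0$: here the quantity to be bounded is simply $M_{n,k}(\varphi_{x,2};x)$, and Lemma \ref{l3}(iii) gives $M_{n,k}(\varphi_{x,2};x)=\frac{k^{2}-5k+2n+4}{n(n-1)}x^{2}$, whose coefficient is $O(1/n)$; since $w_0\equiv 1$ this already yields the asserted bound.

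For $p\geq 1$, I would substitute $1/w_p(t)=1+t^p$ and split by linearity,
\begin{eqnarray*}
w_p(x)M_{n,k}\left(\frac{\varphi_{x,2}}{w_p(t)};x\right)=w_p(x)M_{n,k}(\varphi_{x,2};x)+w_p(x)M_{n,k}\big((t-x)^2 t^p;x\big).
\end{eqnarray*}
The first summand is handled exactly as in the case $p=0$, using in addition $w_p(x)\leq 1$, so it is at most $Cx^2/n$. The heart of the argument is the second summand, which I would estimate by the Cauchy--Schwarz inequality for the positive linear functional $M_{n,k}(\cdot;x)$: writing $(t-x)^2 t^p=\varphi_{x,2}(t)\cdot t^p$ and choosing factors $\varphi_{x,2}$ and $t^p$,
\begin{eqnarray*}
M_{n,k}\big((t-x)^2 t^p;x\big)\leq\sqrt{M_{n,k}(\varphi_{x,4};x)}\,\sqrt{M_{n,k}(t^{2p};x)}.
\end{eqnarray*}
By Lemma \ref{l3}(v) the first factor under the root is $O(x^4/n^2)$, since the numerator polynomial in $n$ there has degree $2$ against a denominator of degree $4$; hence $\sqrt{M_{n,k}(\varphi_{x,4};x)}=O(x^2/n)$. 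By Lemma \ref{l1} the second equals $\frac{[n-k+2p]_{2p}}{[n]_{2p}}x^{2p}$ with coefficient bounded in $n$, so $\sqrt{M_{n,k}(t^{2p};x)}=O(x^p)$. Multiplying, $M_{n,k}((t-x)^2 t^p;x)=O(x^{p+2}/n)$.

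Finally I would absorb the surplus power of $x$ into the weight: since $w_p(x)\,x^{p+2}=x^2\cdot\frac{x^p}{1+x^p}\leq x^2$, the second summand is likewise at most $Cx^2/n$, and collecting the two estimates with $N_{p,k}$ the resulting constant proves the lemma. I expect the only delicate point to be keeping the $O$-estimates uniform with the correct dependence on $x$: one must retain the explicit $x^m$ factors supplied by Lemmas \ref{l1} and \ref{l3} rather than treat the bounds as bare $O(n^{-\cdot})$, and one must check that $\sup_n [n-k+2p]_{2p}/[n]_{2p}<\infty$, which is immediate as this ratio tends to $1$. As an alternative to Cauchy--Schwarz, one can expand $(t-x)^2 t^p=t^{p+2}-2x t^{p+1}+x^2 t^p$ and compute directly through Lemma \ref{l1}; the coefficient of $x^{p+2}$ is then the second difference of $a_m:=[n-k+m]_m/[n]_m$, and since $a_m=1+\frac{m(m-k)}{n}+O(n^{-2})$ while the second difference of $m(m-k)$ equals $2$, this coefficient is $\frac{2}{n}+O(n^{-2})=O(1/n)$, recovering the same estimate.
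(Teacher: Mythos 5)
Your proposal is correct, and your main route differs from the paper's in how it handles the cross term. The paper expands $\varphi_{x,2}(t)/w_p(t)=\varphi_{x,2}(t)+t^{p+2}-2xt^{p+1}+x^{2}t^{p}$ and evaluates each monomial moment via Lemma \ref{l1}, so that the three weighted terms combine into $x^{p+2}$ times the second difference of $a_m=[n-k+m]_m/[n]_m$ at $m=p$, which is $O(1/n)$ --- precisely the ``alternative'' you sketch in your last sentences. Your primary argument instead applies Cauchy--Schwarz to $M_{n,k}(\varphi_{x,2}\cdot e_p;x)$ and invokes the fourth central moment from Lemma \ref{l3}(v); this buys you a cancellation-free estimate (the $O(n^{-2})$ decay of $M_{n,k}(\varphi_{x,4};x)$ is already recorded, so no second-difference computation is needed), at the price of requiring the $2p$-th moment $M_{n,k}(e_{2p};x)$, which by Lemma \ref{l1} needs $2p\le n-k$ and indeed is only finite for $n$ roughly at least $2p$, whereas the paper's expansion only uses moments up to order $p+2$. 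Since both the statement and the paper tacitly assume $n$ large enough for the relevant integrals to converge, this is a cosmetic difference, and your bookkeeping of the powers of $x$ (in particular $w_p(x)x^{p+2}\le x^{2}$) matches the paper's final step $N_{p,k}\frac{x^{2}}{n}(1+x^{p})$ exactly. Both arguments are sound; yours is marginally more robust to not knowing the explicit coefficients, the paper's is marginally more economical in the moments it consumes.
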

\begin{proof}
Using Lemma (\ref{l3}), we can write
\begin{eqnarray*}
w_{0}(x)M_{n,k}\left(\frac{\varphi_{x,2}}{w_{0}(t)};x\right)&=&\displaystyle\frac{k^{2}-5k+2n+4}{n(n-1)}x^{2}\\
&\leq&N_{p,k}\frac{x^{2}}{n},
\end{eqnarray*}
which gives the result for $p=0$.\\
Let $p\geq1$. Then using Lemma \ref{l1} and Lemma \ref{l3}, we get
\begin{eqnarray*}
M_{n,k}\left(\frac{\varphi_{x,2}}{w_{p}(t)};x\right)&=&M_{n,k}(e_{p+2};x)-2xM_{n,k}(e_{p+1};x)+x^{2}M_{n,k}(e_{p};x)+M_{n,k}(\varphi_{x,2};x)\\
&=&\frac{(n-p-2)!(n-k+p+2)!}{n!(n-k)!}x^{p+2}-2\frac{(n-p-1)!(n-k+p+1)!}{n!(n-k)!}x^{p+2}\\
&+&\frac{(n-p)!(n-k+p)!}{n!(n-k)!}x^{p+2}+\frac{k^{2}-5k+2n+4}{n(n-1)}x^{2}\\
&\leq& N_{p,k}\frac{x^{2}}{n}(1+x^{p}),
\end{eqnarray*}
where $N_{p,k}$ is a positive constant. Hence, the proof is completed.
\end{proof}

\section{\textbf{Rate of Convergence}}
 Let $p\in N_{0}$. By $C_{p}^{2}[0,\infty),$ we denote the space of all functions $f\in C_{p}[0,\infty)$ such that $f',f''\in C_{p}[0,\infty)$.\\

\begin{thm} \label{t2}
Let $p\in N_{0}$, $n\in N$ and $g\in C_{p}^{1}[0,\infty)$, there exists a positive constant $N_{p,k}$ such that
\begin{eqnarray*}
w_{p}(x)|M_{n,k}(f;x)-f(x)|\leq N_{p,k}\|f'\|_{p}\frac{x}{\sqrt{n}}
\end{eqnarray*}
for all $x\in(0,\infty)$.
\end{thm}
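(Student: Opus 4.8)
The plan is to reduce everything to a first--order moment estimate, bounding $|f(t)-f(x)|$ by an expression to which Lemma \ref{l4}, Theorem \ref{t1} and Lemma \ref{l3} can be applied after a single use of the Cauchy--Schwarz inequality. (I read the hypothesis as $f\in C_p^1[0,\infty)$, since the conclusion is stated in terms of $f$ and $f'$.) First I would invoke $M_{n,k}(1;x)=1$ from Lemma \ref{l1}(i) and the fundamental theorem of calculus to write
\begin{eqnarray*}
M_{n,k}(f;x)-f(x)=M_{n,k}\bigl(f(t)-f(x);x\bigr)=M_{n,k}\Bigl(\int_x^t f'(u)\,du;x\Bigr).
\end{eqnarray*}

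The next step is the key weighted pointwise bound on the increment. Since $f'\in C_p$, we have $|f'(u)|\le \|f'\|_p/w_p(u)=\|f'\|_p(1+u^p)$. For $u$ lying between $x$ and $t$ one has $u^p\le x^p+t^p$, hence $1+u^p\le(1+x^p)+(1+t^p)=1/w_p(x)+1/w_p(t)$, so that
\begin{eqnarray*}
|f(t)-f(x)|\le \|f'\|_p\,|t-x|\left(\frac{1}{w_p(x)}+\frac{1}{w_p(t)}\right).
\end{eqnarray*}

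Applying the positive operator $M_{n,k}$, multiplying by $w_p(x)$ and using $w_p(x)\cdot\tfrac{1}{w_p(x)}=1$ in the term carrying $1/w_p(x)$ gives
\begin{eqnarray*}
w_p(x)|M_{n,k}(f;x)-f(x)|\le \|f'\|_p\left[w_p(x)M_{n,k}\Bigl(\frac{|t-x|}{w_p(t)};x\Bigr)+M_{n,k}(|t-x|;x)\right].
\end{eqnarray*}
For the first term the Cauchy--Schwarz inequality for positive linear operators, after splitting $w_p(x)=\sqrt{w_p(x)}\sqrt{w_p(x)}$, yields
\begin{eqnarray*}
w_p(x)M_{n,k}\Bigl(\frac{|t-x|}{w_p(t)};x\Bigr)\le \sqrt{w_p(x)M_{n,k}\Bigl(\frac{\varphi_{x,2}}{w_p(t)};x\Bigr)}\,\sqrt{w_p(x)M_{n,k}\Bigl(\frac{1}{w_p(t)};x\Bigr)},
\end{eqnarray*}
and Lemma \ref{l4} together with Theorem \ref{t1} bounds the two factors by $\sqrt{N_{p,k}x^2/n}$ and $\sqrt{N_{p,k}}$, giving $N_{p,k}\,x/\sqrt{n}$. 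For the second term I would again apply Cauchy--Schwarz with $M_{n,k}(1;x)=1$ and then Lemma \ref{l3}(iii), obtaining $M_{n,k}(|t-x|;x)\le\sqrt{M_{n,k}(\varphi_{x,2};x)}\le N_{p,k}\,x/\sqrt{n}$. Combining the two estimates produces the claimed bound.

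The main obstacle I anticipate is bookkeeping of the weights rather than any deep estimate: one must split the $w_p(x)$ factor as $\sqrt{w_p(x)}\sqrt{w_p(x)}$ at exactly the right place so that each Cauchy--Schwarz factor matches the normalization appearing in Lemma \ref{l4} and Theorem \ref{t1}, and one must carry \emph{both} $1/w_p(x)$ and $1/w_p(t)$ in the pointwise bound on $|f(t)-f(x)|$ so that no stray factor of $(1+x^p)$ survives into the final estimate.
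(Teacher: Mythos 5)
Your proposal is correct and follows essentially the same route as the paper: the fundamental theorem of calculus, the pointwise bound $|f(t)-f(x)|\le\|f'\|_{p}|t-x|\bigl(1/w_{p}(x)+1/w_{p}(t)\bigr)$, a split into a weighted and an unweighted first absolute moment, and the Cauchy--Schwarz inequality combined with Lemma \ref{l3}, Theorem \ref{t1} and Lemma \ref{l4}. The only difference is that you make explicit the splitting $w_{p}(x)=\sqrt{w_{p}(x)}\sqrt{w_{p}(x)}$ and the elementary inequality $u^{p}\le x^{p}+t^{p}$, both of which the paper leaves implicit.
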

\begin{proof}
We have
\begin{eqnarray*}
f(t)-f(x)=\int_{x}^{t}f'(v)dv.
\end{eqnarray*}
By using linearity of $M_{n,k}$ we get
\begin{eqnarray}\label{eq16}
M_{n,k}(f;x)-f(x)=M_{n,k}\left(\int_{x}^{t}f'(v)dv;x\right).
\end{eqnarray}
Remark that
\begin{eqnarray*}
\left|\int_{x}^{t}f'(v)dv\right|\leq\|f'\|_{p}\left|\int_{x}^{t}\frac{dv}{w_{p}(v)}\right|\leq\|f'\|_{p}|t-x|\left(\frac{1}{w_{p}(t)}+\frac{1}{w_{p}(x)}\right).
\end{eqnarray*}
From (\ref{eq16}) we obtain
\begin{eqnarray*}
w_{p}(x)|M_{n,k}(f;x)-f(x)|\leq\|f'\|_{p}\left\{M_{n,k}(|\varphi_{x,1}|;x)+w_{p}(x)M_{n,k}\left(\frac{|\varphi_{x,1}|}{w_{p}(t)};x\right)\right\}.
\end{eqnarray*}
Using Cauchy-Schwarz inequality, we can write
\begin{eqnarray*}
M_{n,k}(|\varphi_{x,1}|;x)\leq \left(M_{n,k}(|\varphi_{x,2}|;x)\right)^{1/2},
\end{eqnarray*}
\begin{eqnarray*}
M_{n,k}\left(\frac{|\varphi_{x,1}|}{w_{p}(t)};x\right)\leq\left(M_{n,k}\left(\frac{1}{w_{p}(t)};x\right)\right)^{1/2}\left(M_{n,k}\left(\frac{\varphi_{x,2}}{w_{p}(t)};x\right)\right)^{1/2}.
\end{eqnarray*}
Using Leema \ref{l3}, Theorem \ref{t1} and Lemma \ref{l4}, we obtain
\begin{eqnarray*}
w_{p}(x)|M_{n,k}(f;x)-f(x)|\leq N_{p,k}\|f'\|_{p}\frac{x}{\sqrt{n}}.
\end{eqnarray*}
\end{proof}
\begin{lemma}\label{l5} Let $p\in N_{0}$, If
\begin{eqnarray}\label{eq18}
H_{n,k}(f;x)=M_{n,k}(f;x)-f\left(x+\frac{1-k}{n}x\right)+f(x),
\end{eqnarray}
then there exists a positive constant $N_{p,k}$ such that for all $x\in(0,\infty)$ and $n\in N$, we have
\begin{eqnarray*}
w_{p}(x)|H_{n,k}(g;x)-g(x)|\leq N_{p,k}||g''||_{p}\frac{x^{2}}{n}
\end{eqnarray*}
for any function $g\in C_{p}^{2}$.
\end{lemma}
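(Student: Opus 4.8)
The plan is to exploit the fact that the auxiliary operator $H_{n,k}$ is constructed precisely so as to reproduce linear functions, and then to run the standard second-order Taylor argument. First I would record this reproduction property. Since the shift appearing in (\ref{eq18}) satisfies $x+\frac{1-k}{n}x=\frac{n-k+1}{n}x=M_{n,k}(e_{1};x)$ by Lemma \ref{l1}(ii), a direct computation from the definition gives $H_{n,k}(e_{0};x)=1$ and $H_{n,k}(e_{1};x)=x$, equivalently $H_{n,k}(\varphi_{x,0};x)=1$ and $H_{n,k}(\varphi_{x,1};x)=0$. This vanishing of the first moment is exactly what upgrades the rate from $x/\sqrt{n}$ (as in Theorem \ref{t2}) to $x^{2}/n$.

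Next I would write Taylor's formula with integral remainder for $g\in C_{p}^{2}$,
\[
g(t)=g(x)+g'(x)(t-x)+\int_{x}^{t}(t-v)g''(v)\,dv,
\]
and apply the linear operator $H_{n,k}$ in the variable $t$. Using the reproduction property the constant and linear terms collapse, so that, writing $F_{x}(t):=\int_{x}^{t}(t-v)g''(v)\,dv$ (which satisfies $F_{x}(x)=0$) and $y:=\frac{n-k+1}{n}x$, I obtain
\[
H_{n,k}(g;x)-g(x)=H_{n,k}(F_{x};x)=M_{n,k}(F_{x};x)-F_{x}(y).
\]
It then remains to bound the two pieces separately after multiplying by $w_{p}(x)$.

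For both pieces the starting estimate is the same: since $1/w_{p}(v)=1+v^{p}$ is increasing, for $v$ lying between $x$ and $t$ one has $1/w_{p}(v)\le 1/w_{p}(x)+1/w_{p}(t)$, whence $|F_{x}(t)|\le \|g''\|_{p}\,\varphi_{x,2}(t)\big(1/w_{p}(x)+1/w_{p}(t)\big)$. Applying $w_{p}(x)M_{n,k}(\cdot;x)$ splits the first piece into $M_{n,k}(\varphi_{x,2};x)$, controlled by Lemma \ref{l3}(iii), and $w_{p}(x)M_{n,k}(\varphi_{x,2}/w_{p}(t);x)$, controlled by Lemma \ref{l4}; both are $O(x^{2}/n)$. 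For the second piece I would use $(y-x)^{2}=\frac{(1-k)^{2}}{n^{2}}x^{2}$ together with the boundedness of the ratio $w_{p}(x)/w_{p}(y)=(1+y^{p})/(1+x^{p})$ (uniform in $n$, since $y=\frac{n-k+1}{n}x\le x$ for $k\ge 1$), giving $w_{p}(x)|F_{x}(y)|=O(x^{2}/n^{2})$, which is absorbed into $O(x^{2}/n)$. Collecting the two bounds yields the stated inequality with a suitable constant $N_{p,k}$.

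The step I expect to require the most care is the remainder estimate involving the weights: justifying the pointwise bound on $|F_{x}(t)|$ uniformly in $t$ and ensuring that the weight ratio $w_{p}(x)/w_{p}(y)$ stays bounded independently of both $n$ and $x$. Once these are in place, everything else is bookkeeping with the already-established moment identities of Lemmas \ref{l1}, \ref{l3} and the variance-type estimate of Lemma \ref{l4}.
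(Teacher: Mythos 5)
Your proposal is correct and follows essentially the same route as the paper's proof: reproduction of linear functions by $H_{n,k}$, Taylor's formula with integral remainder, the split into $M_{n,k}(F_{x};x)$ and the shifted remainder $F_{x}(y)$, the weight estimate $1/w_{p}(v)\le 1/w_{p}(x)+1/w_{p}(t)$, and the final appeal to Lemma \ref{l3}(iii) and Lemma \ref{l4}. The only differences are cosmetic (you drop the harmless factor $1/2$ in the remainder bound and handle the shifted term via the ratio $w_{p}(x)/w_{p}(y)$ rather than the paper's direct monotonicity bound).
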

\begin{proof}
From Lemma \ref{l1}, we observe that the operators $H_{n,k}$ are linear and reproduce the linear functions.\\
Hence
\begin{eqnarray*}
H_{n,k}(\varphi_{x,1};x)=0.
\end{eqnarray*}
Let $g\in C_{p}^{2}$. By the Taylor formula one can write
\begin{eqnarray*}
g(t)-g(x)=(t-x)g'(x)+\int_{x}^{t}(t-v)g''(v)dv, \,\,\, t\in(0,\infty).
\end{eqnarray*}
Then,\\
\noindent
$|H_{n,k}(g;x)-g(x)|$
\begin{eqnarray*}
&=&|H_{n,k}(g(t)-g(x));x|=\left|H_{n,k}\left(\int_{x}^{t}(t-v)g''(v)dv;x\right)\right|\\
&=&\left|M_{n,k}\left(\int_{x}^{t}(t-v)g''(v)dv;x\right)-\int_{x}^{x+\frac{1-k}{n}x}\left(x+\frac{1-k}{n}x-v\right)g''(v)dv\right|.
\end{eqnarray*}
Since
\begin{eqnarray*}
\left|\int_{x}^{t}(t-v)g''(v)dv\right|\leq \frac{\parallel g''\parallel_{p}(t-x)^{2}}{2}\left(\frac{1}{w_{p}(x)}+\frac{1}{w_{p}(t)}\right)
\end{eqnarray*}
and
\begin{eqnarray*}
\left|\int_{x}^{x+\frac{1-k}{n}x}\left(x+\frac{1-k}{n}x-v\right)g''(v)dv\right|\leq\frac{||g''||_{p}}{2w_{p}(x)}\left(\frac{1-k}{n}x\right)^{2},
\end{eqnarray*}
we get
\begin{eqnarray*}
w_{p}(x)|H_{n,k}(g;x)-g(x)|\leq\frac{\|g''\|_{p}}{2}\left[M_{n,k}(\varphi_{x,2};x)+w_{p}(x)M_{n,k}\left(\frac{\varphi_{x,2}}{w_{p}(t)};x\right)\right]+\frac{\|g''\|_{p}}{2}\left(\frac{1-k}{n}x\right)^{2}.
\end{eqnarray*}
Hence by Lemma \ref{l4}, we obtain
\begin{eqnarray*}
w_{p}(x)|H_{n,k}(g;x)-g(x)|\leq N_{p,k}\|g''\|_{p}\frac{x^{2}}{n}
\end{eqnarray*}
for any function $g\in C_{p}^{2}$. The Lemma is proved.
\end{proof}

 The next theorem is the main result of this section.
\begin{thm} \label{t4}
Let $p\in N_{0}$, $n\in N$ and $f\in C_{p}[0,\infty)$, then there exists a positive constant $N_{p,k}$ such that
\begin{eqnarray*}
w_{p}(x)\left|M_{n,k}(f;x)-f(x)\right|\leq N_{p,k}\omega_{p}^{2}\left(f,\frac{x}{\sqrt{n}}\right)+\omega_{p}^{1}\left(f,\frac{1-k}{n}x\right).
\end{eqnarray*}
 Furthermore, if $f\in Lip_{p}^{2}\alpha$ for some $\alpha\in(0,2],$ then
\begin{eqnarray*}
w_{p}(x)\left|M_{n,k}(f;x)-f(x)\right|\leq N_{p,k}\left(\frac{x^{2}}{n}\right)^{\alpha/2}+\omega_{p}^{1}\left(f,\frac{1-k}{n}x\right),
\end{eqnarray*}
holds.
\end{thm}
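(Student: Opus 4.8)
The plan is to reduce everything to the auxiliary operator $H_{n,k}$ from Lemma \ref{l5} by means of the Peetre $K$-functional. Reading off the definition \eqref{eq18}, we have the exact identity
\begin{eqnarray*}
M_{n,k}(f;x)-f(x)=\bigl(H_{n,k}(f;x)-f(x)\bigr)+\Bigl(f\Bigl(x+\tfrac{1-k}{n}x\Bigr)-f(x)\Bigr),
\end{eqnarray*}
so that after multiplying by $w_p(x)$ and applying the triangle inequality the problem splits into two pieces. The second piece is immediate: since the point $x+\frac{1-k}{n}x$ lies at distance $\frac{|1-k|}{n}x$ from $x$, the very definition of $\omega_p^1$ gives $w_p(x)\bigl|f(x+\frac{1-k}{n}x)-f(x)\bigr|\leq\omega_p^1\bigl(f,\frac{1-k}{n}x\bigr)$. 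All the work is therefore concentrated in estimating $w_p(x)|H_{n,k}(f;x)-f(x)|$.

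For that term I would first record that $H_{n,k}$ is a bounded operator on $C_p$: from $H_{n,k}(f;x)=M_{n,k}(f;x)-f(x+\frac{1-k}{n}x)+f(x)$, Theorem \ref{t1} controls the $M_{n,k}$ part, while $w_p(x)|f(x)|\leq\|f\|_p$ handles the last term; the shifted term obeys $w_p(x)|f(x+\frac{1-k}{n}x)|\leq\|f\|_p\,w_p(x)\bigl(1+(x+\frac{1-k}{n}x)^p\bigr)$, and since $x+\frac{1-k}{n}x=\frac{n+1-k}{n}\,x$ with the factor $\frac{n+1-k}{n}$ bounded uniformly in $n$, this ratio stays bounded. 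Hence $w_p(x)|H_{n,k}(f;x)|\leq N_{p,k}\|f\|_p$ for a suitable constant.

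The central step is the classical $K$-functional argument. For an arbitrary $g\in C_p^2$, I decompose
\begin{eqnarray*}
H_{n,k}(f;x)-f(x)=H_{n,k}(f-g;x)+\bigl(H_{n,k}(g;x)-g(x)\bigr)+\bigl(g(x)-f(x)\bigr),
\end{eqnarray*}
multiply by $w_p(x)$, and bound the three summands using, respectively, the boundedness of $H_{n,k}$ just established, Lemma \ref{l5}, and $w_p(x)|g(x)-f(x)|\leq\|f-g\|_p$. This yields
\begin{eqnarray*}
w_p(x)|H_{n,k}(f;x)-f(x)|\leq N_{p,k}\Bigl(\|f-g\|_p+\tfrac{x^2}{n}\|g''\|_p\Bigr).
\end{eqnarray*}
Taking the infimum over $g\in C_p^2$ produces the Peetre $K$-functional $K_p\bigl(f,\frac{x^2}{n}\bigr)$. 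The main obstacle, and the only non-elementary ingredient, is invoking the weighted equivalence $K_p(f,\delta^2)\leq N_{p,k}\,\omega_p^2(f,\delta)$ between this $K$-functional and the second-order weighted modulus; applying it with $\delta=x/\sqrt{n}$ gives $w_p(x)|H_{n,k}(f;x)-f(x)|\leq N_{p,k}\,\omega_p^2\bigl(f,\frac{x}{\sqrt{n}}\bigr)$, and combining with the $\omega_p^1$ estimate finishes the first inequality. The Lipschitz assertion is then a direct corollary: if $f\in Lip_p^2\alpha$ then $\omega_p^2(f,\delta)=O(\delta^\alpha)$, so $\omega_p^2\bigl(f,\frac{x}{\sqrt{n}}\bigr)=O\bigl((\frac{x^2}{n})^{\alpha/2}\bigr)$, and substituting this into the first estimate yields the second one.
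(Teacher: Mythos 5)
Your argument is correct, and its skeleton coincides with the paper's: both split $M_{n,k}(f;x)-f(x)$ into $H_{n,k}(f;x)-f(x)$ plus the shift term $f\bigl(x+\frac{1-k}{n}x\bigr)-f(x)$, handle the shift with $\omega_p^1$, establish the uniform boundedness of $H_{n,k}$ on $C_p$ (you do this directly from Theorem \ref{t1} plus the observation that $\frac{n+1-k}{n}$ is bounded; the paper absorbs it into its use of Theorem \ref{t1}), and then compare $f$ with a smooth function to invoke Lemma \ref{l5}. The one genuine difference is how the smoothing step is carried out. You take the infimum over all $g\in C_p^2$ to produce the Peetre $K$-functional $K_p\bigl(f,\frac{x^2}{n}\bigr)$ and then cite the weighted equivalence $K_p(f,\delta^2)\leq N\,\omega_p^2(f,\delta)$ as an external black box. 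The paper instead makes exactly this step self-contained: it chooses a specific $g$, namely the Steklov mean $f_h(x)=\frac{4}{h^2}\int_0^{h/2}\int_0^{h/2}\{2f(x+s+t)-f(x+2(s+t))\}\,ds\,dt$, and verifies by hand the two estimates $\|f-f_h\|_p\leq\omega_p^2(f,h)$ and $\|f_h''\|_p\leq\frac{9}{h^2}\omega_p^2(f,h)$ before setting $h=x/\sqrt{n}$ --- which is precisely the standard proof of the upper bound in the equivalence you quote. So your version is more modular and shorter, at the cost of importing a nontrivial fact about the polynomial-weighted modulus (available, e.g., in Becker's paper \cite{MB}), while the paper's version costs a few extra lines but needs no outside input. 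Two small points worth tightening in your write-up: the argument of $\omega_p^1$ should really be $\frac{|1-k|}{n}x$ (as you note, the relevant quantity is the distance, and $1-k$ is negative for $k\geq 2$; the paper is sloppy here too), and when you bound the shifted term in the boundedness of $H_{n,k}$ you should observe that $n\geq k$ guarantees $x+\frac{1-k}{n}x\geq 0$ so that $f$ is actually evaluated inside its domain.
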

\begin{proof}
Let $p\in N_{0}$, $f\in C_{p}[0,\infty)$ and $x\in(0,\infty)$ be fixed. We consider the Steklov means  of $f$ by $f_{h}$ and given by the formula
\begin{eqnarray*}
f_{h}(x)=\frac{4}{h^{2}}\int_{0}^{h/2}\int_{0}^{h/2}\{2f(x+s+t)-f(x+2(s+t))\}ds dt,
\end{eqnarray*}
for $h>0$ and  $x\geq0$. We have
\begin{eqnarray*}
f(x)-f_{h}(x)=\frac{4}{h^{2}}\int_{0}^{h/2}\int_{0}^{h/2}\Delta_{s+t}^{2}f(x)ds dt,
\end{eqnarray*}
which gives
\begin{eqnarray}\label{eq19}
\|f-f_{h}\|_{p}\leq \omega_{p}^{2}(f,h).
\end{eqnarray}
Furthermore, we have
\begin{eqnarray*}
f_{h}^{''}(x)=\frac{1}{h^{2}}\left(8\Delta_{h/2}^{2}f(x)-\Delta_{h}^{2}f(x)\right),
\end{eqnarray*}
and
\begin{eqnarray}\label{eq20}
\|f_{h}^{''}\|_{p}\leq \frac{9}{h^{2}}\omega_{p}^{2}(f,h).
\end{eqnarray}
From (\ref{eq19}) and (\ref{eq20}) we conclude that $f_{h}\in C_{p}^{2}[0,\infty)$ if $f\in C_{p}[0,\infty)$.\\
Moreover
\begin{eqnarray*}
|M_{n,k}(f;x)-f(x)|&\leq&H_{n,k}(|f(t)-f_{h}(t);x|)+|f(x)-f_{h}(x)|\\
&+&|H_{n,k}(f_{h};x)-f_{h}(x)|+\left|f\left(x+\frac{1-k}{n}x\right)-f(x)\right|,
\end{eqnarray*}
where $H_{n,k}$ is defined in (\ref{eq18}).\\
 Since $f_{h}\in C_{p}^{2}[0,\infty)$ by the above, it follows from Theorem \ref{t1} and Lemma \ref{l5} that\\
\begin{eqnarray*}
w_{p}(x)\left|M_{n,k}(f;x)-f(x)\right|&\leq&(N+1)\|f-f_{h}\|_{p}+N_{p,k}\|f_{h}^{''}\|_{p}\frac{x^{2}}{n}\\
&+&w_{p}(x)\left|f\left(x+\frac{1-k}{n}x\right)-f(x)\right|.
\end{eqnarray*}
By (\ref{eq19}) and (\ref{eq20}), the last inequality yields that
\begin{eqnarray*}
w_{p}(x)\left|M_{n,k}(f;x)-f(x)\right|\leq N_{p,k}\omega_{p}^{2}(f;h)\left(1+\frac{1}{h^{2}}\frac{x^{2}}{n}\right)+\omega_{p}^{1}\left(f,\frac{1-k}{n}x\right).
\end{eqnarray*}
Thus, choosing $h=\frac{x}{\sqrt{n}}$, the first part of the proof is completed.\\
The remainder of the proof can be easily obtained from the definition of the space $Lip_{p}^{2}\alpha$.
\end{proof}

{\bf Acknowledgements}
The author is extremely grateful to the referee for making valuable suggestions leading to the overall improvements in the paper.

\end{document}